\newtheorem{thm}{Theorem}[section]
\newtheorem*{thm*}{Theorem}
\newtheorem{lem}[thm]{Lemma}
\newtheorem{cor}[thm]{Corollary}
\theoremstyle{definition}
\theoremstyle{remark}
\newtheorem{rem}[thm]{Remark}
\newtheorem{prob}[thm]{Problem}
\numberwithin{equation}{section}
\DeclareMathOperator{\inte}{int}
\renewcommand{\epsilon}{\varepsilon}
\renewcommand{\kappa}{\varkappa}
\begin{document}

\title[Geometric coincidence results\dots]{Geometric coincidence results from multiplicity of continuous maps}

\author{R.N.~Karasev}
\thanks{This research is supported by the Dynasty Foundation, the President's of Russian Federation grant MK-113.2010.1, the Russian Foundation for Basic Research grants 10-01-00096 and 10-01-00139, the Federal Program ``Scientific and scientific-pedagogical staff of innovative Russia'' 2009--2013}
\email{r\_n\_karasev@mail.ru}
\address{Roman Karasev, Dept. of Mathematics, Moscow Institute of Physics and Technology, Institutskiy per. 9, Dolgoprudny, Russia 141700}

\subjclass[2000]{52A20,55M20}
\keywords{multiplicity of maps, convex bodies, affine diameters}

\begin{abstract}
In this paper we study geometric coincidence problems in the spirit of the following problems by B.~Gr\"unbaum: How many affine diameters of a convex body in $\mathbb R^n$ must have a common point? How many centers (in some sense) of hyperplane sections of a convex body in $\mathbb R^n$ must coincide?

One possible approach to such problems is to find topological reasons for multiple coincidences for a continuous map between manifolds of equal dimension. In other words, we need topological estimates for the multiplicity of a map. In this work examples of such estimates and their geometric consequences are presented.
\end{abstract}

\maketitle

\section{Introduction}

Before going to particular geometric problems, let us state the corresponding topological question, which is itself quite natural:

\begin{prob}
\label{proj-mult-prob}
Suppose $d$ is a positive integer. Find the largest possible $\kappa(d)$ with the following property: For any continuous map $f:\mathbb RP^d\to S^d$, there exists $x\in S^d$ such that $|f^{-1}(x)| \ge \kappa(d)$, where $|f^{-1}(x)|$ is the cardinality of the preimage.
\end{prob}

Since $\mathbb RP^d$ and $S^d$ are not homeomorphic for $d\ge 2$, then obviously $\kappa(d)\ge 2$ for $d\ge 2$. In what follows we mostly rely on the partial answer to this question given in~\cite{kar2010coinc}:

\begin{thm}
\label{proj-mult-thm}
Suppose that $q$ is a power of two, $q(m+1) < 2^l-1$. Then any continuous map
$$
f : \mathbb RP^{2^l-2-m}\to \mathbb R^{2^l-2}
$$
has a coincident $q$-tuple, i.e. $|f^{-1}(x)|\ge q$ for some $x\in\mathbb R^{2^l-2}$.
\end{thm}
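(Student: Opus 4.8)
The plan is to argue by contradiction, turning ``no coincident $q$-tuple'' into an equivariant map out of a configuration space and then destroying that map with a cohomological (Fadell--Husseini) index computation. The case $q=2$ is the guiding model: there ``no coincident pair'' means $f$ embeds $M:=\mathbb{RP}^{2^l-2-m}$ into $\mathbb R^{2^l-2}$, but using $(1+a)^{2^l}=1$ in $H^*(M;\mathbb Z/2)=\mathbb Z/2[a]/(a^{d+1})$ (valid since $2^l>d:=2^l-2-m$) one gets $\overline w(M)=(1+a)^{-(d+1)}=(1+a)^{m+1}$, hence $\overline w_{m+1}(M)=a^{m+1}\ne 0$ (as $m+1\le d$); since $m+1>(2^l-2)-d=m$, this contradicts Whitney's embedding criterion. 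The general case reinstates the same mechanism inside an equivariant cohomology ring.

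\emph{Setup.} Write $q=2^k$, let $A=(\mathbb Z/2)^k$, and identify the ground set $\{1,\dots,q\}$ with $A$ acting on itself by translation, so $A\subseteq S_q$ acts freely. Suppose $f:\mathbb{RP}^d\to\mathbb R^{2^l-2}$ has no coincident $q$-tuple. Then $(p_a)_a\mapsto (f(p_a))_a$ sends the ordered configuration space $F(\mathbb{RP}^d,q)$ into $(\mathbb R^{2^l-2})^q\setminus\Delta$, where $\Delta$ is the thin diagonal; this map is $A$-equivariant, and composing with the standard $A$-equivariant deformation retraction $(\mathbb R^{2^l-2})^q\setminus\Delta\simeq S(U)$, $U:=\overline{\mathbb R}[A]\otimes\mathbb R^{2^l-2}$ (the reduced regular representation of $A$ tensored with $\mathbb R^{2^l-2}$), yields an $A$-map $F(\mathbb{RP}^d,q)\to S(U)$. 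Here $U$ is a fixed-point-free $A$-representation whose Euler class is $e_A(U)=\prod_{\chi\ne 1}\ell_\chi^{\,2^l-2}\in H^*(BA;\mathbb Z/2)=\mathbb Z/2[t_1,\dots,t_k]$, the product over the $q-1$ nontrivial characters $\chi$ of $A$ of the corresponding linear forms $\ell_\chi\in H^1(BA)$ (this product is nonzero), and the Gysin sequence of $S(U)$ gives $\Ind_A(S(U))=(e_A(U))$, the principal ideal it generates. Hence the $A$-map forces $e_A(U)=0$ in $H^*_A(F(\mathbb{RP}^d,q))$, and the whole theorem reduces to the opposite assertion
$$
\prod_{\chi\ne 1}\ell_\chi^{\,2^l-2}\ \ne\ 0\quad\text{in}\quad H^*_A\bigl(F(\mathbb{RP}^d,q)\bigr).
$$
Note that the naive bound coming from $A$ acting freely on the $qd$-dimensional space $F(\mathbb{RP}^d,q)$ is too weak: under the hypothesis $\deg e_A(U)=(q-1)(2^l-2)\le qd$, so that bound does not exclude $e_A(U)=0$.

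\emph{The crux.} To prove the displayed non-vanishing I would bound $\Ind_A\bigl(F(\mathbb{RP}^d,q)\bigr)$ \emph{from above} by an $A$-map \emph{into} $F(\mathbb{RP}^d,q)$ built from the normal data of the diagonals of $(\mathbb{RP}^d)^{\times q}$: the sphere bundle of the $A$-equivariant normal bundle $\nu\cong\overline{\mathbb R}[A]\otimes T\mathbb{RP}^d$ of the thin diagonal $\mathbb{RP}^d\hookrightarrow(\mathbb{RP}^d)^{\times q}$ maps $A$-equivariantly, via a tubular neighbourhood, into $F(\mathbb{RP}^d,q)$, and iterating this along the subgroup lattice of $A$ — equivalently, along the stages by which clusters of the $q$ points may collide, a wreath-product recursion $\mathbb Z/2\wr S_{q/2}\subset\cdots$ on $k$ — refines the bound. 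In each step the equivariant Euler class of the relevant normal bundle is, by the twisting formula together with $(1+a)^{2^l}=1$, an explicit polynomial in $a\in H^1(\mathbb{RP}^d;\mathbb Z/2)$ and the $\ell_\chi$; the computation then reduces $\prod_{\chi\ne1}\ell_\chi^{\,2^l-2}\ne 0$ to the non-vanishing of a concrete combination of monomials in the powers of $a$, and this is precisely where the hypothesis $q(m+1)<2^l-1$ — equivalently $(q-1)(m+1)<d$, which forces all exponents of $a$ that occur to stay $\le d$ — gets used up. I expect this last stage to be the main obstacle: controlling $H^*_A\bigl(F(\mathbb{RP}^d,q)\bigr)$ precisely enough to certify that the target's Euler class survives is the delicate point, since the equivariant homotopy type of the configuration space of real projective space is genuinely intricate, and it is only there that the full strength of the numerical bound is consumed.
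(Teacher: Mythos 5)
Your $q=2$ argument is complete and correct (modulo the standard remark that a continuous injection of a closed manifold is a topological embedding, for which the vanishing of high dual Stiefel--Whitney classes still holds; equivalently one can run your own configuration-space setup with $q=2$). For general $q=2^k$ the reduction you perform is also sound: restricting from $\Sigma_q$ to $A=(\mathbb Z/2)^k$ acting freely on the index set, retracting $(\mathbb R^{2^l-2})^q\setminus\Delta$ onto $S(U)$ with $U=\overline{\mathbb R}[A]\otimes\mathbb R^{2^l-2}$, and concluding that the theorem follows once one shows $\prod_{\chi\neq 1}\ell_\chi^{2^l-2}\neq 0$ in $H^*_A\bigl(F(\mathbb RP^d,q)\bigr)$ is exactly the right frame, and it matches in spirit the argument of the cited source \cite{kar2010coinc} (note that the present paper only quotes the theorem and contains no proof of it).

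The problem is that the proof stops precisely where the theorem begins. The displayed non-vanishing is the entire content of the statement, and you do not establish it: the passage beginning ``I would bound $\Ind_A(F(\mathbb RP^d,q))$ from above\dots'' is a plan, not an argument. Concretely, (i) you never compute the equivariant Euler classes of the normal bundles of the partial diagonals --- for the thin diagonal alone one must show that $\prod_{\chi\neq1}\ell_\chi^{2^l-2}$ does not lie in the ideal of $H^*(BA)[a]/(a^{d+1})$ generated by $e_A\bigl(\overline{\mathbb R}[A]\otimes T\mathbb RP^d\bigr)=\prod_{\chi\neq1}\sum_i\binom{d+1}{i}a^i\ell_\chi^{d-i}$, and nothing of the sort is verified; (ii) the ``wreath-product recursion'' over the subgroup lattice of $A$ is invoked but not set up, so it is not shown that the partial-diagonal strata can be handled compatibly; and (iii) the numerical hypothesis $q(m+1)<2^l-1$ is never actually consumed --- you only predict where it should enter. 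Since you yourself flag this stage as ``the main obstacle,'' the submission is an outline of a plausible strategy rather than a proof; whether that strategy closes with exactly the stated arithmetic condition is precisely what remains to be demonstrated.
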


\begin{rem}
This theorem also holds for maps $f : \mathbb RP^{2^l-2-m}\to S^{2^l-2}$, because the proof in~\cite{kar2010coinc} uses only the Stiefel--Whitney classes of the domain and the target manifolds. In particular, putting $m=0$ we obtain $\kappa(2^l-2)\ge 2^{l-1}$, in other words $\kappa(d) = \frac{d}{2}+1$ for $d=2^l-2$. For other $m$ we obtain that $\kappa(2^l-2-m)$ is at least the greatest power of two in the range $\left(1, \frac{2^l-1}{m+1}\right)$, which is a nontrivial result if $2^l-1 > 4m + 4$.
\end{rem}

It is an open problem to find more lower bounds for $\kappa(d)$. For example, in the case $d=2^l-1$ Theorem~\ref{proj-mult-thm} cannot give more than $\kappa(d)\ge 1$, because the characteristic classes of $\mathbb RP^{2^l-1}$ vanish (of course we know that $\kappa(d)\ge 2$ for $d\ge 2$). In the paper of M.~Gromov~\cite{grom2010} the cardinalities of preimages were also studied (for piecewise linear or piecewise smooth maps under some genericity conditions), but useful results were obtained for maps from polyhedra of high complexity, very different from $\mathbb RP^d$. It is also noted in~\cite[p.~447]{grom2010} that any $d$-manifold admits a map to $\mathbb R^d$ with multiplicity $|f^{-1}(x)|\le 4d$ for any $x\in\mathbb R^d$, thus giving an upper bound for our function: $\kappa(d)\le 4d$.

The approach to the multiplicity in~\cite{kar2010coinc} uses computations in the cohomology of configuration spaces of manifolds. It may happen that the technique of contraction in the space of (co)cycles from~\cite{grom2010}, after some modifications, can also be applied to Problem~\ref{proj-mult-prob}. In particular, it would be natural to have a linear lower bound $\kappa(d)\ge cd$ for some constant $c>0$, which we do not have at the moment.

In the rest of the paper we produce some geometric consequences of Theorem~\ref{proj-mult-thm} with $S^{2^l-2}$ as the target space.

The author thanks Vladimir Dol'nikov for pointing out the geometrical problems and the discussion.

\section{Continuous selection of lines in every direction}

Consider a family of lines $\Omega$ in $\mathbb R^d$ containing one straight line in every direction, depending continuously on the direction $\ell \in\mathbb RP^{d-1}$. A natural question is the following:

\begin{prob}
Find the largest possible $\kappa_2(d)$ such that we can always find $\ge \kappa_2(d)$ distinct lines in such continuous family $\Omega$ that have a common point.
\end{prob}

\begin{lem}
In the introduced notation $\kappa_2(d)\ge \kappa(d)$
\end{lem}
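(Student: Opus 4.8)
\emph{Proof idea.} The plan is to convert the family $\Omega$ into a continuous map $f\colon\mathbb RP^d\to S^d$ and then quote the defining property of $\kappa(d)$ from Problem~\ref{proj-mult-prob}: a point of $S^d$ with at least $\kappa(d)$ preimages under $f$ should witness a point of $\mathbb R^d$ lying on at least $\kappa(d)$ lines of $\Omega$. To set up coordinates, I would realize $\mathbb RP^d$ as the set of lines through the origin in $\mathbb R\times\mathbb R^d$, with homogeneous coordinates $[t:v]$, $t\in\mathbb R$, $v\in\mathbb R^d$; here $\{t=0\}$ is a copy of $\mathbb RP^{d-1}$ carrying the directions, and $[1:0]$ is one distinguished point. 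The target I would realize as the one-point compactification $S^d=\mathbb R^d\cup\{\infty\}$.

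Next I would fix a continuous choice of point $c(\ell)\in\Omega_\ell$ — for instance the point of $\Omega_\ell$ nearest to the origin, which is continuous in $\ell$ and bounded since $\mathbb RP^{d-1}$ is compact — and define $f([1:0])=\infty$ together with
$$
f([t:v])=c\big([v]\big)+\frac{t}{|v|^2}\,v\in\mathbb R^d\qquad(v\ne0).
$$
The routine verifications are that $f$ is well defined, since the expression $\tfrac{t}{|v|^2}v$ is invariant under $(t,v)\mapsto(\lambda t,\lambda v)$ for $\lambda\ne0$ and $[v]$ depends only on $v$ up to scalars, and that $f$ is continuous; the only nontrivial point is continuity at $[1:0]$, where for $v\to0$ the vector $\tfrac{1}{|v|^2}v$ has norm $1/|v|\to\infty$ while $c([v])$ stays bounded, so $f([1:v])\to\infty$ in $S^d$.

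Then I would read off the fibers. Every point with $v\ne0$ is sent into $\mathbb R^d$, hence $f^{-1}(\infty)=\{[1:0]\}$ consists of a single point. For $x\in\mathbb R^d$, a preimage $[t:v]$ with $v\ne0$ exists exactly when $x\in\Omega_{[v]}$, and distinct directions give distinct preimages, so $|f^{-1}(x)|$ equals the number of lines of $\Omega$ through $x$; these lines are automatically distinct, having distinct directions. Applying the definition of $\kappa(d)$ to $f$ yields $x_0\in S^d$ with $|f^{-1}(x_0)|\ge\kappa(d)$. Since $\kappa(d)\ge2$ for $d\ge2$ while $|f^{-1}(\infty)|=1$, the point $x_0$ cannot be $\infty$, so $x_0\in\mathbb R^d$ and $\Omega$ contains at least $\kappa(d)$ pairwise distinct concurrent lines through $x_0$; hence $\kappa_2(d)\ge\kappa(d)$. (The case $d=1$ is trivial since $\kappa(1)=1$.)

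I expect the main — and really the only — obstacle to be the honest construction of $f$: choosing the base points $c(\ell)$ continuously and checking continuity of $f$ at the exceptional point $[1:0]$. Conceptually one must be careful that all the ``escaping'' directions are absorbed into the single preimage over $\infty$, since it is precisely the smallness of $f^{-1}(\infty)$ that forces the $\kappa(d)$-fold coincidence to occur at a genuine point of $\mathbb R^d$.
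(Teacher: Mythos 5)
Your proof is correct and follows essentially the same route as the paper: identify the union of the lines (as the total space of the tautological line bundle over $\mathbb RP^{d-1}$) with $\mathbb RP^d$ minus a point, extend the evaluation map to $\tilde f:\mathbb RP^d\to S^d$ by sending that point to $\infty$, and apply the definition of $\kappa(d)$. Your version is merely more explicit — writing the homeomorphism in homogeneous coordinates and checking that the fiber over $\infty$ is a single point, so the $\kappa(d)$-fold point must lie in $\mathbb R^d$ — a detail the paper's one-line proof leaves implicit.
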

\begin{proof}
Consider the union $X$ of all lines in $\Omega$ with the natural topology, and take its $1$-point compactification $\tilde X$, which is homeomorphic to $\mathbb RP^d$. The natural map $X\to \mathbb R^d$ is extended to a continuous map 
$$
\tilde f : \tilde X = \mathbb RP^d \to S^d,
$$
and the multiple points of $\tilde f$ give the desired coincident lines. 
\end{proof}

Note that \emph{affine diameters} (longest straight line sections in given direction) of a strictly convex body form a continuous family of lines in every direction, thus proving:

\begin{cor}
Suppose $K\subset \mathbb R^d$ is a strictly convex body. Then there exist at least $\kappa_2(d)\ge \kappa(d)$ distinct affine diameters of $K$, having a common point.
\end{cor}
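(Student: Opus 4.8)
The plan is to verify the hypothesis of the preceding Lemma for the family of affine diameters: namely, that for a strictly convex body $K$ the affine diameters form a family of lines containing exactly one line in every direction, depending continuously on the direction $\ell\in\mathbb RP^{d-1}$. Once this is done, the Corollary follows immediately by applying the Lemma (together with Theorem~\ref{proj-mult-thm} through the Corollary's definition of $\kappa$).

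First I would fix the bookkeeping. For a unit vector $u$ and a point $x$, let $g(u,x)$ be the length of the chord $(x+\mathbb R u)\cap K$, set to $0$ when the intersection is empty; projecting $K$ orthogonally onto $u^{\perp}$ exhibits $g(u,\cdot)$ as an upper semicontinuous concave function on a convex body, so it attains its maximum $M(u)$, and a maximizing chord is an affine diameter of $K$ in direction $u$. The crucial step is \emph{uniqueness}. If two parallel chords of maximal length $L$ projected to distinct points $y_1\ne y_2$ of $u^{\perp}$, concavity would force the chord over $\tfrac12(y_1+y_2)$ also to have length $L$; writing out the upper and lower endpoints of the three chords over $y_1$, $y_2$, $\tfrac12(y_1+y_2)$ and using that the middle chord cannot be longer than $L$, one finds that the upper endpoint over $\tfrac12(y_1+y_2)$ must be exactly the midpoint of the two original upper endpoints. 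All three of these points lie on $\partial K$, so strict convexity forces the two original upper endpoints to coincide, and likewise for the lower endpoints; hence $y_1=y_2$, a contradiction. Thus there is a well-defined affine diameter $D_K(u)$, and since $D_K(u)=D_K(-u)$ we obtain a map $D_K\colon\mathbb RP^{d-1}\to\{\text{affine lines in }\mathbb R^d\}$ which picks out exactly one line in each direction.

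Second I would prove that $D_K$ is continuous. One checks that $g$ is jointly continuous wherever it is positive (a chord lying in $\operatorname{int}K$ persists under small perturbations of $u$ and $x$) and upper semicontinuous everywhere, whence $M$ is continuous. Given $u_n\to u$, represent $D_K(u_n)$ by its midpoint $x_n$; these stay in the bounded set $K$, so along a subsequence $x_n\to x$, and upper semicontinuity gives $g(u,x)\ge\limsup_n g(u_n,x_n)=\lim_n M(u_n)=M(u)$, so $x+\mathbb R u$ is a maximal chord and hence equals $D_K(u)$ by uniqueness. Since every subsequential limit of $x_n$ lies on $D_K(u)$, the map $D_K$ is continuous. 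Now apply the Lemma with $\Omega=\{D_K(\ell):\ell\in\mathbb RP^{d-1}\}$: it produces $\kappa(d)$ pairwise distinct lines of $\Omega$ through a common point, i.e.\ $\kappa(d)$ distinct affine diameters of $K$ with a common point, and $\kappa_2(d)\ge\kappa(d)$.

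The main obstacle is the uniqueness step: this is exactly the place where strict convexity is indispensable, and one has to handle the equality case among the upper (and lower) endpoints of the three parallel chords with some care before invoking strict convexity. After that, the continuity of $D_K$ is routine compactness-and-semicontinuity bookkeeping, and the only remaining minor point is the standard observation that the longest chord parallel to $u$ coincides with the classical notion of an affine diameter (the one characterized by parallel supporting hyperplanes at its endpoints).
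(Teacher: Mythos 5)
Your proposal is correct and follows exactly the paper's route: the paper disposes of this corollary in one sentence by observing that the affine diameters of a strictly convex body form a continuous family with one line per direction and then invoking the preceding Lemma. You simply supply the details of that observation (uniqueness of the longest chord via concavity of the chord-length function plus strict convexity, and continuity via semicontinuity and compactness), which the paper leaves implicit.
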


This result can be sharpened; the multiple point may be found in the interior:

\begin{thm}
Suppose $K\subset \mathbb R^d$ is a strictly convex body. Then there exist at least $\kappa(d)$ distinct affine diameters of $K$, having a common point in the interior of $K$.
\end{thm}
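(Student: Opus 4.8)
The plan is to present the affine diameters of $K$ as the fibres of a continuous map out of $\mathbb{RP}^d$, but to choose the target so that every multiple point forced onto $\partial K$ is identified to a single point; then the multiplicity bound $\kappa(d)$ forces a genuine multiple point inside $K$. We may assume $d\ge 2$, the case $d=1$ being trivial.

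Here is the construction I would carry out. By strict convexity, for each $\ell\in\mathbb{RP}^{d-1}$ there is a unique affine diameter $D_\ell$, a nondegenerate chord; let $m_\ell$ be its midpoint and $r_\ell>0$ its half-length. The midpoint, the half-length and the line depend continuously on $\ell$; the relative interior of $D_\ell$ lies in $\inte K$ and the endpoints of $D_\ell$ lie on $\partial K$. Let $\gamma\to\mathbb{RP}^{d-1}$ be the tautological line bundle, $E=D(\gamma)$ its unit-interval bundle, and $\partial E=S(\gamma)$ its boundary (a $(d-1)$-sphere). Set $g\colon E\to K$, $g(\ell,v)=m_\ell+r_\ell v$, which is well defined since $v\in\ell\subset\mathbb{R}^d$ is an honest vector and $r_\ell$ an honest nonnegative number. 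Then $g$ is continuous, affine and injective on every fibre, $g(\inte E)\subset\inte K$, and $g^{-1}(\partial K)=\partial E$ exactly. Now I would use two standard identifications: the decomposition $\mathbb{RP}^d=E\cup_{\partial E}B^d$ (a $d$-ball glued to the interval bundle along $S^{d-1}$), which gives homeomorphisms $E/\partial E\cong\mathbb{RP}^d/B^d\cong\mathbb{RP}^d$ since $B^d$ is a tamely embedded ball; and $K/\partial K\cong S^d$, the boundary sphere of a convex body collapsed to a point $\ast$. Because $g$ maps $\partial E$ into $\partial K$, the composition of $g$ with the quotient $K\to K/\partial K$ is constant on $\partial E$ and hence descends to a continuous map $\overline{h}\colon\mathbb{RP}^d\cong E/\partial E\to S^d\cong K/\partial K$.

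It remains to read off the multiplicities and quote the multiplicity result. Over $\ast$ the set $\overline{h}^{-1}(\ast)$ is the single collapsed point of $E/\partial E$, because $g^{-1}(\partial K)=\partial E$ is precisely what was collapsed; hence $|\overline{h}^{-1}(\ast)|=1$. Over any other point $y$ of $S^d$, identified with a point of $\inte K$, the preimage lies in $\inte E$, where nothing was collapsed, and since $g$ is injective on fibres and distinct directions give distinct chords, $|\overline{h}^{-1}(y)|$ is precisely the number of affine diameters of $K$ through $y$. By the definition of $\kappa(d)$ in Problem~\ref{proj-mult-prob}, the continuous map $\overline{h}\colon\mathbb{RP}^d\to S^d$ has some $y_0$ with $|\overline{h}^{-1}(y_0)|\ge\kappa(d)$, and $\kappa(d)\ge 2$; therefore $y_0\neq\ast$, so $y_0$ is an interior point of $K$ lying on at least $\kappa(d)$ distinct affine diameters.

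I do not expect a real obstacle; the content is entirely in the choice of target. Collapsing $\partial K$ absorbs the multiplicity of $g$ over boundary points — which can be large, since a strictly convex body need not be smooth and a whole positive-dimensional family of affine diameters may terminate at one corner (as for a lens) — so it removes exactly the phenomenon that prevented the preceding Corollary from placing the common point in the interior. The honest alternative, approximating $K$ by smooth strictly convex bodies and passing to a limit, is more delicate: one would have to prevent the $\kappa(d)$ diameters from degenerating to fewer than $\kappa(d)$ and the common point from escaping to $\partial K$. The construction above sidesteps both and needs only the two bookkeeping homeomorphisms $E/\partial E\cong\mathbb{RP}^d$, $K/\partial K\cong S^d$, and the continuity of $\ell\mapsto(m_\ell,r_\ell)$.
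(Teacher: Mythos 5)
Your proof is correct and is essentially the paper's own argument: the paper restricts each affine diameter to its open part $b(\ell)=a(\ell)\cap\inte K$, identifies the union with the open total space $E(\gamma)$, and one-point compactifies both sides to get $\tilde f\colon\mathbb RP^d\to S^d$, which is the same Thom-space map you obtain by taking the closed disk bundle and collapsing $\partial E$ and $\partial K$. Your extra observation that $\kappa(d)\ge 2$ rules out the collapsed point is the (implicit) reason the paper's multiple point also lands in $\inte K$, so the two write-ups differ only cosmetically.
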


\begin{rem}
This problem (along with some other problems) on affine diameters were mentioned in~\cite[\S~6.5]{gr1963}. It was conjectured that some $d+1$ affine diameters have a common point, but our Theorem~\ref{proj-mult-thm} gives $\frac{d}{2}+1$ in the best case $d=2^l-2$.
\end{rem}

\begin{proof}
For every direction $\ell\in\mathbb RP^{d-1}$ consider the affine diameter $a(\ell)$ of $K$ and the open segment $b(\ell) = a(\ell)\cap\inte K$. The union of all $b(\ell)$ is homeomorphic to the total space of the canonical bundle $\gamma: E(\gamma) \to \mathbb RP^{d-1}$. We obtain an obvious map 
$$
f : E(\gamma) \to \inte K.
$$
Note that the one-point compactification of $E(\gamma)$ is homeomorhic to $\mathbb RP^d$, and the one-point compactification of $\inte K$ is homemorphic to $S^d$. Since $f$ is a proper map, we extend it to
$$
\tilde f : \mathbb RP^d\to S^d
$$
and apply the definition of $\kappa(d)$.
\end{proof}

\section{Continuous selection of points in hyperplanes}

There is another geometric problem, that has something to do with Problem~\ref{proj-mult-prob}. Denote $M(d,k)$ the set of all affine $k$-planes in $\mathbb R^d$ with natural topology.

\begin{prob}
Consider continuous selections of a point in a hyperplane, i.e. continuous functions $f : M(d, d-1)\to \mathbb R^d$ such that 
$$
\forall H\in M(d,d-1)\ f(H)\in H.
$$ 
Find the largest possible $\kappa_3(d)$ such that for any continuous selection $f$ we can find at least $\kappa=\kappa_3(d)$ distinct hyperplanes $H_1,\ldots, H_\kappa$ such that 
$$
f(H_1)=\dots =f(H_\kappa).
$$
\end{prob}

\begin{lem}
In the introduced notation $\kappa_3(d)\ge \kappa(d)$.
\end{lem}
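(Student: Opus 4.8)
The plan is to mimic the proofs of the two preceding results: identify the parameter space $M(d,d-1)$ with the total space of the canonical line bundle over $\mathbb{RP}^{d-1}$, check that the selection $f$ is proper, compactify to obtain a map $\mathbb{RP}^d\to S^d$, and invoke the definition of $\kappa(d)$.

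First I would coordinatize $M(d,d-1)$. Every hyperplane $H\subset\mathbb{R}^d$ can be written as $H=H_{u,t}:=\{x\in\mathbb{R}^d:\langle x,u\rangle=t\}$ with $u\in S^{d-1}$, $t\in\mathbb{R}$, and $(u,t)\sim(-u,-t)$. This identifies $M(d,d-1)$ homeomorphically with the total space $E(\gamma)$ of the canonical line bundle $\gamma$ over $\mathbb{RP}^{d-1}$, the point $([u],t)$ corresponding to $H_{u,t}$. As in the proof of the theorem on affine diameters, the one-point compactification of $E(\gamma)$, i.e. the Thom space of $\gamma$, is homeomorphic to $\mathbb{RP}^d$, while the one-point compactification of $\mathbb{R}^d$ is $S^d$.

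Next I would verify that $f:M(d,d-1)\to\mathbb{R}^d$ is proper. A sequence of hyperplanes leaves every compact subset of $M(d,d-1)\cong E(\gamma)$ precisely when, after passing to a subsequence with $u_n\to u_0$, one has $|t_n|\to\infty$ (the base $\mathbb{RP}^{d-1}$ being compact). Since $f(H_{u_n,t_n})\in H_{u_n,t_n}$, we get $\langle f(H_{u_n,t_n}),u_n\rangle=t_n$, hence $|f(H_{u_n,t_n})|\ge|t_n|\to\infty$. Thus $f$ is proper and extends to a continuous map $\tilde f:\mathbb{RP}^d\to S^d$ of one-point compactifications, sending the added point of $\mathbb{RP}^d$ to the added point $\infty$ of $S^d$; moreover, by properness again, $\tilde f^{-1}(\infty)=\{\infty\}$.

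Finally I would apply the definition of $\kappa(d)$ to $\tilde f$: there is a point $x\in S^d$ with $|\tilde f^{-1}(x)|\ge\kappa(d)$. Since $\kappa(d)\ge2$ while $|\tilde f^{-1}(\infty)|=1$, the point $x$ lies in $\mathbb{R}^d$ and all its preimages lie in $M(d,d-1)$; they are $\kappa(d)$ distinct hyperplanes $H_1,\dots,H_{\kappa(d)}$ with $f(H_1)=\dots=f(H_{\kappa(d)})=x$, as required. Everything here except the properness check is purely formal, and the properness itself follows immediately from the constraint $f(H)\in H$, so I expect no serious obstacle; the one point to state carefully is the homeomorphism between the Thom space of $\gamma$ and $\mathbb{RP}^d$, which is exactly the fact already used above.
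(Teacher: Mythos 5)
Your proof is correct and takes essentially the same route as the paper: the paper compactifies $M(d,d-1)$ to $\mathbb{RP}^d$ by adjoining the hyperplane at infinity (phrased via projective duality, which is the same identification as your canonical-bundle coordinates) and extends $f$ to $\tilde f:\mathbb{RP}^d\to S^d$. You merely spell out the details the paper leaves implicit, namely the properness of $f$ and the exclusion of the point at infinity via $\kappa(d)\ge 2$.
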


\begin{proof}
By the standard duality reasoning, if we denote $H_\infty$ the hyperplane at infinity then $M(d, d-1)\cup\{H_\infty\} = \mathbb RP^d$. The map $f$ is extended by continuity to the map $\tilde f: \mathbb RP^d\to S^d$, and the result follows.
\end{proof}

Let us state a particular geometric consequence of the above lemma:

\begin{thm}
Let $c(L)$ be a motion-invariant continuous selection of a point in an $(d-1)$-dimensional convex compact $L$, e.g. $c(L)$ is the mass center, or the Steiner point of $L$ etc. Suppose $K\subset \mathbb R^d$ is a strictly convex body. Then we can find at least $\kappa=\kappa_3(d)\ge \kappa(d)$ distinct hyperplane sections $L_1,\ldots, L_\kappa$ of $K$ such that
$$
c(L_1) = \dots = c(L_\kappa).
$$
\end{thm}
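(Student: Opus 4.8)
The plan is to promote the assignment $H\mapsto c(H\cap K)$ to a genuine continuous selection $F\colon M(d,d-1)\to\mathbb R^d$ (so that $F(H)\in H$ for \emph{every} hyperplane $H$), and then apply the definition of $\kappa_3(d)$ to $F$; the preceding lemma simultaneously guarantees that the resulting number is at least $\kappa(d)$. To construct $F$, split $M(d,d-1)$ into the open set $U$ of hyperplanes meeting $\inte K$ and its closed complement $C$. On $U$ the slice $L_H:=H\cap K$ is a genuine $(d-1)$-dimensional convex compact which varies continuously (in the Hausdorff metric) with $H$, so continuity of $c$ makes $g\colon U\to\inte K$, $g(H):=c(L_H)$, continuous; moreover for the mass center and the Steiner point one has $c(L_H)\in\rint L_H\subset\inte K$.

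On $C$, i.e.\ when $H$ misses $\inte K$, define $F(H)$ to be the unique point of $H$ nearest to $K$; this is well defined and continuous on $C$ because $K$ is convex, and for a supporting hyperplane it is exactly the contact point $H\cap K$, which is a single point by strict convexity. Put $F:=g$ on $U$ and $F:=$ this nearest-point rule on $C$. The only point to check for continuity of $F$ is the frontier of $U$, which consists of supporting hyperplanes: if $H_n\to H_0$ with $H_0$ supporting $K$ at $p$, then $H_0\cap K=\{p\}$ by strict convexity, and whether the $H_n$ lie in $U$ (so $F(H_n)=c(L_{H_n})\in L_{H_n}$ and $L_{H_n}\to\{p\}$) or in $C$ (so $F(H_n)$ is the nearest point of $H_n$ to $K$), one gets $F(H_n)\to p=F(H_0)$; since $M(d,d-1)$ is metrizable, $F$ is continuous.

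Now apply the definition of $\kappa_3(d)$ to $F$: there are $m:=\kappa_3(d)$ distinct hyperplanes $H_1,\dots,H_m$ with $F(H_1)=\dots=F(H_m)=:y$. It remains to see that $y\in\inte K$, for then every $H_i$ lies in $U$ and $L_i:=H_i\cap K$ are $m$ distinct $(d-1)$-dimensional sections with $c(L_1)=\dots=c(L_m)=y$. If $y\notin K$, each $H_i$ is disjoint from $K$ with nearest point $y$, but such a hyperplane is uniquely determined by $y$ --- it passes through $y$ orthogonally to $y-\operatorname{proj}_K y$ --- so $m\ge2$ excludes this. The remaining case $y\in\partial K$, where every $H_i$ would be a supporting hyperplane of $K$ at the point $y$ and all sections $H_i\cap K$ degenerate to $\{y\}$, is the delicate point I expect to be the main obstacle; I would handle it either by first proving the theorem for smooth strictly convex $K$ (where the unique supporting hyperplane at each boundary point rules out $y\in\partial K$ once $m\ge2$) and then passing to a general strictly convex body by approximation, or by perturbing $F$ near the non-smooth part of $\partial K$ so that distinct supporting hyperplanes receive distinct $F$-values.

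One can also avoid the extension altogether and mimic the proof of the affine-diameter theorem directly: the one-point compactification of $U$ is $\mathbb RP^d$, the map $g\colon U\to\inte K$ is proper, hence it extends to $\tilde g\colon\mathbb RP^d\to S^d$, and the definition of $\kappa(d)$ then produces $\kappa(d)$ distinct sections with a common center automatically lying in $\inte K$. This route is cleaner but only yields the unconditional bound $\kappa(d)$ rather than $\kappa_3(d)$.
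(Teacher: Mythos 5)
Your main route is essentially the paper's own proof: extend $H\mapsto c(H\cap K)$ to a continuous selection on all of $M(d,d-1)$ by the nearest-point rule on hyperplanes missing $K$, invoke the definition of $\kappa_3(d)$, and then exclude the cases where the common value $y$ lies outside $K$ or on $\partial K$. The one place you hesitate --- the case $y\in\partial K$ when $K$ is strictly convex but not smooth, so that infinitely many supporting hyperplanes at a corner $y$ all receive the value $y$ --- is a legitimate worry, and in fact the paper's own proof glosses over exactly this point: it says that $f(H)=x$ holds ``for the tangent hyperplane at $x$ only,'' which tacitly assumes the supporting hyperplane at $x$ is unique, something strict convexity does not give. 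Your two suggested repairs are the weak part of the write-up: the smooth-approximation argument has the usual defect that the $\kappa$ distinct hyperplanes for $K_n$ may coalesce in the limit, and the ``perturb $F$'' suggestion is not carried out.

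However, your final paragraph supplies a correct and complete fix, and it is precisely the argument the paper uses for the affine-diameter theorem: the complement of $U$ in $\mathbb RP^d=M(d,d-1)\cup\{H_\infty\}$ is (by polar duality with respect to an interior point of $K$) a closed convex $d$-ball, so the one-point compactification of $U$ is again $\mathbb RP^d$; the map $g(H)=c(H\cap K)$ lands in $\inte K$ up to the boundary issue you note, is proper (a sequence in $U$ converging to a supporting hyperplane has $c(H_n\cap K)\to\partial K$, since $c(L)\in L$ and the sections Hausdorff-converge to a point), and hence extends to $\tilde g:\mathbb RP^d\to S^d$; the multiple point of $\tilde g$ cannot be the point at infinity, whose preimage is a single point, so it lies in $\inte K$ and all $\kappa(d)$ preimages are genuine $(d-1)$-dimensional sections. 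You remark that this only yields $\kappa(d)$ rather than $\kappa_3(d)$, but that is all the theorem's displayed inequality actually asserts, so this route proves the statement in full. In short: your proposal is correct via the second route, coincides in spirit with the paper, and your flagged ``delicate point'' is a genuine imprecision in the paper's own case analysis rather than a defect of your argument alone.
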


\begin{rem}
In the particular case of the mass center the stronger result is known: $d+1$ sections with a common mass center can be found, see~\cite[\S~6.1--6.2]{gr1963}. Again, our Theorem~\ref{proj-mult-thm} gives $\frac{d}{2}+1$ in the best case $d=2^l-2$.
\end{rem}

\begin{proof}
For any hyperplane $H\in M(d,d-1)$ define $f(H)$ as follows:
\begin{itemize}
\item 
If $H\cap K\neq \emptyset$ then $f(H) = c(H\cap K)$.
\item 
If $H\cap K=\emptyset$ then $f(H)$ is the closest to $K$ point in $H$.
\end{itemize}

The map $f$ is a continuous selection of a point in a hyperplane, therefore some $\kappa=\kappa_3(d)$ of hyperplanes have a common point $x$. Consider several cases:

\begin{itemize}
\item
If $x\not\in K$ then $f(H)=x$ holds for the only one hyperplane, orthogonal to the projections vector from $x$ to $K$, so this is not the case;
\item
If $x\in \partial K$ then $f(H)=x$ is valid for the tangent hyperplane at $x$ to $K$ only;
\item
If $x\in\inte K$ then we have the desired statement.
\end{itemize}
\end{proof}

\end{document}